\renewcommand{\sectionmark}[1]
                    {\markboth{Kapitel \thesection\ #1}{}}
\renewcommand{\sectionmark}[1]
                 {\markright{} }
\newtheorem{thm}{Theorem}
\newtheorem{lemma}[thm]{Lemma}
\newtheorem{corollary}[thm]{Corollary}
\newtheorem{proposition}[thm]{Proposition}
\newtheorem*{claim*} {Claim}
\newtheorem{fremdersatz}{Theorem}
\theoremstyle{definition}
   \newtheorem*{definition*}{Definition}
\numberwithin{equation}{section}
\def\parmod{\parskip=2pt plus1pt minus1pt}
\newenvironment{einr}{\parmod
                      \begin{list}{}
                        {\setlength{\rightmargin}{0cm}
                         \setlength{\leftmargin}{0,75cm}
                         \setlength{\labelwidth}{0cm}
                         \setlength{\parsep}{1pt}
                         \setlength{\itemsep}{1pt}
                         \setlength{\topsep}{1pt}
                         \setlength{\partopsep}{0pt}
                         \setlength{\labelsep}{0cm}
                         \setlength{\listparindent}{0pt}
                         \setlength{\itemindent}{0pt}}
                      \item[] \ignorespaces}
                     {\unskip \end{list}}
\def\nat{{\rm I\! N}}
\def\re{{\mathbb R}}
\def\r{\right}
\def\gl{\left\{}
\def\gr{\right\}}
\def\kl{\left(}
\def\kr{\right)}
\def\In{\subseteq}
\def\mi{\setminus}
\def\abb{\longrightarrow}
\def\sym{{\rm Sym}\,}
\renewcommand{\rho}{\varrho}
\renewcommand{\phi}{\varphi}
\renewcommand{\epsilon}{\varepsilon}
\def\beq{\begin{equation}}
\def\eeq{\end{equation}}
\def\beqar{\begin{eqnarray}}
\def\eeqar{\end{eqnarray}}
\def\beqaro{\begin{eqnarray*}}
\def\eeqaro{\end{eqnarray*}}
\def\bsat{\begin{thm}}
\def\esat{\end{thm}}
\def\bsato{\begin{fremdersatz}}
\def\esato{\end{fremdersatz}}
\def\blem{\begin{lemma}}
\def\elem{\end{lem}}
\def\bkor{\begin{corollary}}
\def\ekor{\end{corollary}}
\def\bprop{\begin{proposition}}
\def\eprop{\end{proposition}}
\def\bdefin{\begin{definition}}
\def\edefin{\end{definition}}
\def\bbew{\begin{proof}}
\def\ebew{\end{proof}}
\def\beinr{\begin{einr}}
\def\eeinr{\end{einr}}
\renewcommand{\rho}{\varrho}
\renewcommand{\phi}{\varphi}
\begin{document}

\thispagestyle{plain}

\fancyhead[CE]{J\"urgen Grahl} \fancyhead[CO]{Conjecture of
Br\"uck}
\fancyhead[LE,RO]{\thepage}
\fancyhead[LO,RE]{}
\fancyfoot[CE,CO]{}

\title[Riemann's theorem about conditionally convergent series]{On Riemann's Theorem About\\ Conditionally Convergent Series}

\author{J\"urgen Grahl}
\address{Department of Mathematics, University of W\"urzburg,
  W\"urzburg, Germany} \email{grahl@mathematik.uni-wuerzburg.de}

\author{Shahar Nevo}
\address{Department of Mathematics, Bar-Ilan University,
Ramat-Gan 52900, Israel} \email{nevosh@macs.biu.ac.il}

\thanks{This research is part of the European Science Foundation Networking
Programme HCAA and was supported by  Israel Science Foundation
Grant 395/07.}

\begin{abstract}

We extend Riemann's rearrangement theorem on conditionally
convergent  series of real numbers to multiple instead of simple
sums.
\end{abstract}

\keywords{Conditionally convergent series, Fubini's theorem,
symmetric group.}

 \subjclass[2010]{40A05}

\maketitle
\section{Introduction and statement of results}

By a well-known theorem due to B.~Riemann, each conditionally
convergent series of real numbers can be rearranged in such a way
that the new series converges to some arbitrarily given real value
or to $\infty$ or $-\infty$ (see, for example, \cite[\S{
32}]{heuser}). As to series of vectors in $\re^n$, in 1905
P.~L\'evy \cite{levy} and in 1913 E.~Steinitz \cite{steinitz}
showed the following interesting extension (see also
\cite{rosenthal} for a simplified proof).

\bsato{\bf (L\'evy-Steinitz Theorem)}
The set of all sums of rearrangements of a given series of vectors in
$\re^n$ is either the empty set or a translate of a linear subspace
(i.e., a set of the form $v+M$ where $v$ is a given vector and $M$ is
a linear subspace).
\esato

Here, of course, $M$ is the zero space if and only if the series
is absolutely convergent. For a further generalization of the
L\'evy-Steinitz theorem to spaces of infinite dimension, see
\cite{sofi}.

In this paper, we extend Riemann's result in a different
direction, turning from simple to multiple sums which provides
many more possibilities of rearranging a given sum. First of all,
we have to introduce some notations.

By $\sym(n)$ we denote the symmetric group of the set
$\gl1,\dots,n\gr$, i.e., the group of all permutations of
$\gl1,\dots,n\gr$.

If $(a_m)_m$ is a sequence of elements of a non-empty set $X$,
 $J$ is an infinite subset of $\nat^n$ and if $\tau:J\abb
\nat$ is a bijection and
$$b(j_1,\dots,j_n):=a_{\tau(j_1,\dots,j_n)}\qquad \mbox{ for each }
(j_1,\dots,j_n)\in J,$$
then we say that the mapping
$b:J\abb X, \quad (j_1,\dots,j_n)\mapsto b(j_1,\dots,j_n)$
is a rearrangement of $(a_m)_m$. We write
$$\kl b(j_1,\dots,j_n)\;\Bigm|\; (j_1,\dots,j_n)\in J\kr$$
for such a rearrangement (which is a more convenient notation for
our purposes than the notation $\kl
b_{j_1,\dots,j_n}\kr_{(j_1,\dots,j_n)\in J}$ one would probably
expect). Instead of $\big( b(j_1,\dots,j_n)\;\big|\;
(j_1,\dots,j_n)\in \nat^n\big)$, we also write $\big(
b(j_1,\dots,j_n)\; \big|\; j_1,\dots,j_n\ge 1\big)$ and also use
notations like $\big( b(j_1,\dots,j_n)\; \big|\; j_1\ge
k_1,\dots,j_n\ge k_n\big)$ which should be self-explanatory now.

\bigskip

With these notations, we can state our main result as follows.

\bsat{}\label{mainresult} Let $n\ge2$ be a natural number and let
$\sum_{m=1}^{\infty} a_m$ be a conditionally convergent series of
real numbers $a_m$. For each $\sigma\in \sym(n)$, let $\big(
s_k^{(\sigma)}\big)_{k\ge1}$ be a sequence of real numbers. Then
there exists a rearrangement $\kl b(j_1,\dots,j_n)\;|\;
j_1,\dots,j_n\ge 1\kr$ of $(a_m)_m$ such that for each $\sigma\in
\sym(n)$ and each $k\ge 1$, one has
$$\sum_{j_1=1}^{k}\sum_{j_2=1}^{\infty}\dots\sum_{j_n=1}^{\infty}
b(j_{\sigma(1)},\dots,j_{\sigma(n)})
=s_k^{(\sigma)}.$$
\esat

\bkor\label{Cor} Let $n\ge1$ be a natural number and let
$\sum_{m=1}^{\infty} a_m$ be a conditionally convergent series of
real numbers $a_m$. For each $\sigma\in \sym(n)$, let
$s^{(\sigma)}$ be a real number or $\pm\infty$. Then there exists
a rearrangement $\kl b(j_1,\dots,j_n)\;|\; j_1,\dots,j_n\ge 1\kr$
of $(a_m)_m$ such that for each $\sigma\in \sym(n)$, one has
$$\sum_{j_1=1}^{\infty}\sum_{j_2=1}^{\infty}\dots\sum_{j_n=1}^{\infty}
b(j_{\sigma(1)},\dots,j_{\sigma(n)})=s^{(\sigma)}.$$
\ekor

\bbew{} For $n=1$, this is just Riemann's theorem. For $n\ge2$, it
is an immediate consequence of Theorem \ref{mainresult}. \ebew


By moving to continuous functions on $\re^n$, we can construct an
example of a continuous function in the ``positive part''
$Q:=[0,\infty)^n$ of $\re^n$ whose iterated integrals exist for each
order of integration, but all of them have different values. This is a
kind of ``ultimate'' counterexample to show that the assumptions in
Fubini's theorem are inevitable.

\bkor{}\label{Fubini} Let $n\ge 2$ be a natural number. For each
$\sigma\in \sym(n)$, let $s^{(\sigma)}$ be a real number or
$\pm\infty$. Then there exists a function $f\in C^\infty(Q)$ such
that \beq\label{fubini1} \int_0^\infty\dots \int_0^\infty
f(x_1,\dots,x_n) \;dx_{\sigma(1)}
\;dx_{\sigma(2)}\dots\;dx_{\sigma(n)}=s^{(\sigma)} \quad \mbox{
for each
  } \sigma\in \sym(n).
\eeq
\ekor

\bbew{} Let $\sum_{m=1}^{\infty} a_m$ be some conditionally
convergent series. By Corollary \ref{Cor}, there exists a
rearrangement $\kl b(j_1,\dots,j_n)\;|\; j_1,\dots,j_n\ge 1\kr$ of
$(a_m)_m$ such that for each $\sigma\in \sym(n)$, one has
$$\sum_{k_n=1}^{\infty}\dots\sum_{k_1=1}^{\infty}
b(k_{\sigma^{-1}(1)},\dots,k_{\sigma^{-1}(n)})=s^{(\sigma)}.$$
We set $I=[-0.49\,;\,0.49]^n$ and define the function $\varphi:\re^n\abb\re$ by
$$\varphi(x):=\gl
\begin{array}{rl} A e^{-1/(0.49-||x||)^2} & \mbox{ for }
||x||<0.49 \\ 0 & \mbox{ for } ||x||\ge 0.49,\end{array}\r. $$
where $A>0$ and $||.||$ is the Euclidean norm on $\re^n$. Then
$\varphi\in C^\infty(\re^n)$, and $\varphi$ vanishes outside the
compact set $I$. So $\varphi$ is integrable with respect to the
Lebesgue measure $\lambda$, and by choosing an appropriate $A$ we
can obtain
$$\int_{\re^n}\varphi(x)\;d\lambda(x)=1.$$
In particular, by Fubini's theorem the last integral can be written in
any order of integration, i.e.
$$\int_{-\infty}^\infty\dots \int_{-\infty}^\infty \varphi(x_1,\dots,x_n) \;dx_{\sigma(1)}
\;dx_{\sigma(2)}\dots\;dx_{\sigma(n)}=1 \qquad \mbox{ for each
  } \sigma\in \sym(n).$$
Since $\varphi$ vanishes outside $I$, for any $j_1,\dots, j_n\ge
1$, we also have \beq\label{Fubini2} \int_{-j_n}^\infty\dots
\int_{-j_1}^\infty \varphi(x_1,\dots,x_n) \;dx_{\sigma(1)}
\;dx_{\sigma(2)}\dots\;dx_{\sigma(n)}=1 \qquad \mbox{
  for each   } \sigma\in \sym(n).
\eeq
Now we define $f:Q\abb\re$ by
$$f(x_1,\dots,x_n):=\sum_{j_1=1}^{\infty}\sum_{j_2=1}^{\infty}\dots\sum_{j_n=1}^{\infty}
b(j_1,\dots,j_n)\cdot\varphi(x_1-j_1,\dots,x_n-j_n).$$ For each
$x=(x_1,\dots,x_n)\in Q$, at most one of the terms
$\varphi(x_1-j_1,\dots,x_n-j_n)$ is non-zero, so the multiple sum
in the definition of $f$ reduces to just one term, and we conclude
that $f\in C^\infty(Q)$. Let  $\sigma\in\sym(n)$ be given. Then we
obtain by (\ref{Fubini2})
\begin{align*}
& \int_0^\infty\dots \int_0^\infty f(x_1,\dots,x_n)
\;dx_{\sigma(1)}
\;dx_{\sigma(2)}\dots\;dx_{\sigma(n)}\\
&\qquad=
\sum_{j_{\sigma(n)}=1}^{\infty}\dots\sum_{j_{\sigma(1)}=1}^{\infty}
b(j_1,\dots,j_n)
\int_0^\infty\dots \int_0^\infty \varphi(x_1-j_1,\dots,x_n-j_n)\;dx_{\sigma(1)}\dots\;dx_{\sigma(n)}\\
&\qquad=\sum_{k_n=1}^{\infty}\dots\sum_{k_1=1}^{\infty}b(k_{\sigma^{-1}(1)},\dots,k_{\sigma^{-1}(n)})\cdot 1 \\
&\qquad=s^{(\sigma)}, \end{align*} hence (\ref{fubini1}). \ebew

Observe that in the case $n=2$, by Corollary \ref{Fubini} we get
the existence of a function $f\in C^\infty([0,\infty)^2)$ such
that
$$\int_{0}^{\infty}\int_{0}^{\infty} f(x,y)\;dx\,dy =+\infty
\qquad\mbox{ and } \qquad \int_{0}^{\infty}\int_{0}^{\infty}
f(x,y)\;dy\,dx =-\infty.$$ For the functions $f$ from Corollary
\ref{Fubini}, in general, the improper integral $\int_Q
f(x_1,\dots,x_n) d(x_1,\dots,x_n)$ (in the sense of Riemann) does
not exist in the extended sense\footnote{We say that the improper
  integral $\int_Q f(x_1,\dots,x_n) \;d(x_1,\dots,x_n)$ exists in the
  extended sense if for arbitrary exhaustions $(K_m)_m$ of $Q$ with
  compact sets $K_m$, the limits $\lim_{m\to\infty} \int_{K_m}
  f(x_1,\dots,x_n) \;d(x_1,\dots,x_n)$ exist and are equal.}. A
necessary condition for the existence of this integral is that
$s^{(\sigma)}=s^{(\tau)}$ for every $\sigma,\tau\in \sym(n)$. However,
it can be shown that this condition is not sufficient for the
convergence of the improper integral.

It is obvious that, by modifying the definition of $f$ (such that
its ``peaks'' are at the points
$\kl\frac{1}{2^{j_1}},\dots\frac{1}{2^{j_n}}\kr$ rather than at
the points $(j_1,\dots,j_n)$), one can replace $Q$ by $(0,1]^n$ in
Corollary~\ref{Fubini}, i.e., we can find a function $f\in
C^\infty((0,1]^n)$ whose iterated integrals exist for every order
of integration, but each time give different values. Of course,
this is not possible for the compact cube $[0,1]^n$, since
continuous functions on compact sets are Lebesgue-integrable, so
by Fubini's Theorem their integrals are independent of the order
of integration.

\section{Proofs}

It is well known that a {\it convergent} series
$\sum_{m=1}^{\infty} a_m$ of real numbers is conditionally
convergent if and only if \beq\label{CondConvergable} \sum_{a_m>0}
a_m=\infty \qquad\mbox{ and } \qquad \sum_{a_m<0} a_m=-\infty.
\eeq This property is a bit more general than the property of
conditional convergence: It may also hold for series which are not
convergent themselves. It turns out that this is the property we
actually deal with in the proof of our main result. This gives
rise to the following definition.

\begin{definition*} We say that a series $\sum_{m=1}^{\infty} a_m$ of real
numbers is {\bf
  conditionally convergable} if $\lim_{m\to\infty} a_m=0$ and if
(\ref{CondConvergable}) holds. \end{definition*}

As the proof of Riemann's theorem shows, a series is conditionally
convergable if and only if it has some rearrangement which is
conditionally convergent.

The main advantage of this newly introduced notion is the following:
Conditional convergability is invariant under rearrangements while
conditional convergence is not.

\begin{lemma} \label{split} Let $\sum_{m=1}^{\infty} a_m$ be a
conditionally convergable series of real numbers $a_m$. Then there
is a disjoint partition $\nat=\bigcup_{t=1}^{\infty} I_t$ of $\nat$
into infinite subsets $I_t$ such that for each $t\in\nat$ the series
$\sum_{m\in I_t} a_m$ is conditionally convergable\footnote{In
  notations like $\sum_{j\in I_t} a_j$, the order of   summation is of
  course understood to be in the natural order of increasing indices
  $j$. On the other hand, since conditional convergability is
  invariant under rearrangements, we do not have to specify the order
  of summation at all, at least not for the purpose of Lemma \ref{split}.}.
\end{lemma}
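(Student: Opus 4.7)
The plan is to partition the positive-index, negative-index, and zero-index terms separately, then recombine. Write $P=\{m:a_m>0\}$, $N=\{m:a_m<0\}$, $Z=\{m:a_m=0\}$. Since conditional convergability of a subseries only requires that its positive part diverges to $+\infty$, its negative part diverges to $-\infty$, and its terms tend to $0$ (the last being automatic for any infinite subset of $\nat$, as a subsequence of a null sequence), it suffices to produce a partition $P=\bigsqcup_t P_t$ into infinite sets with $\sum_{m\in P_t}a_m=\infty$, and an analogous partition $N=\bigsqcup_t N_t$ with $\sum_{m\in N_t}a_m=-\infty$. Then setting $I_1:=P_1\cup N_1\cup Z$ and $I_t:=P_t\cup N_t$ for $t\ge2$ yields the desired partition of $\nat$.

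The key construction is a \emph{block-and-distribute} argument on $P$ (and symmetrically on $N$). Enumerate $P=\{p_1<p_2<\cdots\}$. Because $\sum_k a_{p_k}=\infty$, we can inductively choose $0=n_0<n_1<n_2<\cdots$ with $\sum_{j=n_{k-1}+1}^{n_k}a_{p_j}\ge 1$ for every $k\ge 1$. Set $B_k:=\{p_{n_{k-1}+1},\dots,p_{n_k}\}$, so that $P=\bigsqcup_{k\ge 1}B_k$ with each $B_k$ finite, non-empty, and of sum at least $1$. Next, fix any surjection $\phi:\nat\abb\nat$ whose fibres $\phi^{-1}(t)$ are all infinite (e.g., obtained from the Cantor pairing $\nat\leftrightarrow\nat\times\nat$), and define
$$P_t:=\bigcup_{k\in\phi^{-1}(t)}B_k.$$
Then $(P_t)_{t\ge1}$ partitions $P$ into infinite subsets, and for each $t$
$$\sum_{m\in P_t}a_m=\sum_{k\in\phi^{-1}(t)}\,\sum_{m\in B_k}a_m\ \ge\ \sum_{k\in\phi^{-1}(t)}1\ =\ \infty,$$
since $\phi^{-1}(t)$ is infinite.

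Applying the same construction to $-a_m$ on $N$ produces $N=\bigsqcup_t N_t$ with infinite $N_t$ and $\sum_{m\in N_t}a_m=-\infty$. Assembling $I_t$ as above completes the proof. There is no serious obstacle; the only point requiring any thought is the choice of the distribution map $\phi$, which must guarantee \emph{infinitely many} blocks in every part so the $1$-lower-bounds on block sums accumulate to $\infty$. Everything else — disjointness, exhausting $\nat$, the infiniteness of each $I_t$, and $a_m\to 0$ along $I_t$ — is immediate from the construction.
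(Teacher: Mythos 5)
Your proof is correct, and it reaches the same reduction as the paper (treat the positive and negative index sets separately, then recombine $I_t:=P_t\cup N_t$), but the mechanism for splitting $P$ is genuinely different. The paper argues inductively: it peels off $I_1$ from $\nat$, leaving a remainder $I^{(2)}$ that still has infinite sum, then peels $I_2$ off $I^{(2)}$, and so on, taking care at each step to force the smallest not-yet-assigned index into the next piece so that the union exhausts $\nat$. You instead chop $P$ into consecutive finite blocks $B_k$ each of sum $\ge 1$ and redistribute whole blocks via a fixed surjection $\phi:\nat\to\nat$ with infinite fibres. Your version is more modular: exhaustion is automatic (the $B_k$ already partition $P$ and $\phi$ is onto), the $\ge 1$ lower bounds make the divergence of each $\sum_{m\in P_t}a_m$ a one-line estimate, and you don't need the ``smallest element'' safeguard that the paper has to build into its induction. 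The paper's version avoids introducing the pairing map $\phi$ and so is slightly more self-contained, at the cost of stating the binary-split step as ``evident.'' One cosmetic difference: the paper puts $Z=\{m:a_m=0\}$ into $P$ by using $a_m\ge 0$, whereas you isolate $Z$ and dump it into $I_1$; both are harmless since zero terms affect neither the positive/negative divergence nor the null-sequence condition.
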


\bbew{}
I. Let $(\beta_m)_m$ be a sequence of {\it non-negative} numbers such that
$$\sum_{m=1}^{\infty} \beta_m=\infty.$$
Then it is evident that one can decompose $\nat$ into two infinite
disjoint subsets $I_1,I^{(2)}$ such that $1\in I_1$ and
$$\sum_{m\in I_1} \beta_m=\infty \qquad \mbox{ and } \qquad
\sum_{m\in I^{(2)}} \beta_m=\infty.$$
Let us assume that we have already found subsets $I_1,\dots,I_t,
I^{(t+1)} \In\nat$ such that $\nat=I_1\cup\dots\cup I_t\cup I^{(t+1)}$
is a disjoint union,
$$\sum_{m\in I_s} \beta_m=\infty \quad (s=1,\dots,t)\qquad \mbox{ and } \qquad
\sum_{m\in I^{(t+1)}} \beta_m=\infty$$
and such that $\min(\nat\mi (I_1\cup\dots\cup I_{s-1}))\in I_s$ for
$s=1,\dots,t$.
Then we can find a disjoint decomposition $I^{(t+1)}=I_{t+1}\cup I^{(t+2)}$ such
that
$$\sum_{m\in I_{t+1}} \beta_m=\infty \qquad \mbox{ and } \qquad
\sum_{m\in I^{(t+2)}} \beta_m=\infty$$
and such that $\min(\nat\mi (I_1\cup\dots\cup I_t))\in I_{t+1}$.

In this way, inductively we construct subsets $I_t\In\nat$ such
that $\sum_{m\in I_t} \beta_m=\infty$ for all $t$. It is evident
that $\bigcup_{t=1}^{\infty} I_t=\nat$ and that this union is
disjoint. (Observe that it is crucial to put the smallest element
from $\nat\mi (I_1\cup\dots\cup I_{t-1})$ into $I_t$ in each step,
in order to guarantee that each natural number appears in some
$I_t$, i.e., that it is not forgotten ``forever''.)

II. Let $\sum_{m=1}^{\infty} a_m$ be a conditionally convergable
series of real numbers and let
$$P:=\gl m\in\nat\;|\; a_m\ge 0\gr, \qquad
N:=\gl m\in\nat\;|\; a_m< 0\gr.$$
Then we have
$$\sum_{m\in P} a_m=+\infty, \qquad
\sum_{m\in N} a_m=-\infty.$$
By I. there exist disjoint decompositions $P= \bigcup_{t=1}^{\infty}
P_t$ and $N= \bigcup_{t=1}^{\infty}N_t$ of $P$ and $N$ into infinite
subsets $P_t, N_t$ such that
$$\sum_{m\in P_t} a_m=\infty \qquad \mbox{ and } \qquad
\sum_{m\in N_t} a_m=-\infty$$
for all $t$. If we set
$$I_t:=P_t\cup N_t,$$ then for every $t$ the series $\sum_{m\in I_t}
a_m$ is conditionally convergable, and
$\nat=\bigcup_{t=1}^{\infty} I_t$ is a disjoint decomposition.
This proves the assertion. \ebew

Since the proof of the general case of Theorem \ref{mainresult} is
quite abstract, we   start with a discussion of the case $n=2$ to
give the reader an idea of what is really going on.

\begin{proof}[Proof of the Case $n=2$ of Theorem \ref{mainresult}.] Here,
$\sym(2)$ consists of two elements $\sigma=(1 \quad
2)=id_{\gl1,2\gr}$ and $\tau=(2\quad 1)$.

According to Lemma \ref{split}, there exists a disjoint partition
$\nat=\bigcup_{t=1}^{\infty} I_t$ of $\nat$ into infinite subsets
$I_t$ such that for each $t\in\nat$ the series $\sum_{m\in I_t}
a_m$ is conditionally convergable. By Riemann's theorem, we can
find a rearrangement $(b(1,k)\;|\; k\in\nat)$ of $(a_m)_{m\in
I_1}$ such that
$$\sum_{k=1}^{\infty} b(1,k)=s_1^{(\sigma)}.$$
In the same way, we can find a rearrangement $(b(j,1)\;|\; j\ge2)$ of $(a_m)_{m\in I_2}$ such that
$$\sum_{j=2}^{\infty} b(j,1)=s_1^{(\tau)}-b(1,1).$$
Next, we choose a rearrangement $(b(2,k)\;|\; k\ge2)$ of $(a_m)_{m\in I_3}$ such that
$$\sum_{k=2}^{\infty} b(2,k)=s_2^{(\sigma)}-s_1^{(\sigma)}-b(2,1)$$
and a rearrangement $(b(j,2)\;|\; j\ge3)$ of $(a_m)_{m\in I_4}$ such that
$$\sum_{j=3}^{\infty} b(j,2)=s_2^{(\tau)}-s_1^{(\tau)}-b(1,2)-b(2,2),$$
and so on. Proceeding in this way, for each $j\ge 2$ we find a
rearrangement $(b(j,k)\;|\; k\ge j)$ of $(a_m)_{m\in I_{2j-1}}$ such that
$$\sum_{k=j}^{\infty} b(j,k)=s_j^{(\sigma)}-s_{j-1}^{(\sigma)}-\sum_{k=1}^{j-1} b(j,k),$$
and for each $k\ge 2$ we find a rearrangement $(b(j,k)\;|\; j\ge k+1)$ of $(a_m)_{m\in I_{2k}}$ such that
$$\sum_{j=k+1}^{\infty} b(j,k)=s_k^{(\tau)}-s_{k-1}^{(\tau)}-\sum_{j=1}^{k} b(j,k).$$
In this way, $b(j,k)$ is uniquely defined for all $j,k\in\nat$,
$(b(j,k)\;|\; j,k\in\nat)$ is a rearrangement of $(a_m)_m$, and the
$b(j,k)$ satisfy the equations
$$\sum_{j=1}^{N}\sum_{k=1}^{\infty} b(j,k)
=s_1^{(\sigma)}+\sum_{j=2}^{N} \kl
s_j^{(\sigma)}-s_{j-1}^{(\sigma)}\kr =s_N^{(\sigma)},$$
$$\sum_{k=1}^{N}\sum_{j=1}^{\infty} b(j,k)
=s_1^{(\tau)}+\sum_{k=2}^{N} \kl s_k^{(\tau)}-s_{k-1}^{(\tau)}\kr =s_N^{(\tau)}$$
for all $N\in\nat$, as asserted.
\end{proof}

Now we turn to the general case.

\begin{proof}[Proof of Theorem \ref{mainresult}] We prove the theorem by
induction. It suffices to show that for each $n\ge2$, the validity
of Corollary \ref{Cor} for $n-1$ implies the validity of the
theorem for $n$. (Here it is important to note that the corollary
also holds for $n=1$ in view of Riemann's theorem.)

So let some $n\ge 2$ be given and assume that Corollary \ref{Cor} is
valid for $n-1$ instead of $n$. Let $(a_m)_m$ be a sequence of real
numbers such that $\sum_{m=1}^{\infty} a_m$ is conditionally convergent.

According to Lemma \ref{split}, there exists a disjoint partition
$\nat=\bigcup_{t=1}^{\infty} I_t$ of $\nat$ into infinite subsets
$I_t$ such that for each $t\in\nat$ the series $\sum_{m\in I_t}
a_m$ is conditionally convergable.

\medskip

For an integer $d\ge 0$, we consider the following assumption.

\medskip

\noindent{\bf Assumption $A_d$.} The quantities $b(j_1,\dots,j_n)$
are already defined for all $j_1,\dots,j_n\in\nat$ with $\gl
j_1,\dots,j_n\gr\cap\gl 1,\dots,d\gr\ne\emptyset$ such that
$$\kl b(j_1,\dots,j_n)\;|\; \gl j_1,\dots,j_n\gr\cap\gl
1,\dots,d\gr\ne\emptyset\kr$$ is a rearrangement of $\kl a_m\;|\;
m\in\bigcup_{t=1}^{nd} I_t\kr$ and such that for all
$k\in\gl1,\dots,d\gr$, all $\nu\in\gl1,\dots,n\gr$ and all
$\sigma\in\sym(n)$ with $\sigma(\nu)=1$ one has
\beq\label{AssumptionA}
\sum_{j_2=1}^{\infty}\dots\sum_{j_n=1}^{\infty}
b(j_{\sigma(1)},\dots,j_{\sigma(\nu-1)},k,j_{\sigma(\nu+1)},\dots,j_{\sigma(n)})
=s_k^{(\sigma)}- s_{k-1}^{(\sigma)}; \eeq here, $s_0^{(\sigma)}=0$
for all $\sigma\in \sym(n)$.

Here, for $\nu=1$, the quantity
$b(j_{\sigma(1)},\dots,j_{\sigma(\nu-1)},k,j_{\sigma(\nu+1)},\dots,j_{\sigma(n)})$
is of course understood to be just
$b(k,j_{\sigma(2)},\dots,j_{\sigma(n)})$. A similar comment
applies to several other notations in the sequel.

We note that this is trivially satisfied for $d=0$ since in this case
the assumption is empty.

Now let some integer $d\ge 0$ be given and assume that $A_d$ is
satisfied. We want to show that also $A_{d+1}$ is satisfied.  This
is done by induction once again: For given
$\mu\in\gl1,\dots,n+1\gr$, we consider the following assumption.

\medskip

\noindent{\bf Assumption $B_{d,\mu}$.} The quantities
$b(j_1,\dots,j_n)$ are already defined for all
$j_1,\dots,j_n\in\nat$ with $d+1\in\gl j_1,\dots,j_{\mu-1}\gr$
such that
$$\kl b(j_1,\dots,j_n)\;|\; j_1,\dots,j_n\ge d+1,\; d+1\in\gl j_1,\dots,j_{\mu-1}\gr\kr$$
is a rearrangement of $\kl a_m\;|\;
m\in\bigcup_{t=nd+1}^{nd+\mu-1} I_t\kr$ and such that for all
$\nu\in\gl1,\dots,\mu-1\gr$ and all $\sigma\in\sym(n)$ with
$\sigma(\nu)=1$, one has \beq\label{AssumptionB}
\sum_{j_2=1}^{\infty}\dots\sum_{j_n=1}^{\infty}
b(j_{\sigma(1)},\dots,j_{\sigma(\nu-1)},d+1,j_{\sigma(\nu+1)},\dots,j_{\sigma(n)})
=s_{d+1}^{(\sigma)}-s_d^{(\sigma)}. \eeq Again we note that for
$\mu=1$ the assumption $B_{d,\mu}$ is empty, hence trivially true.

So we let some $\mu\in\gl1,\dots,n\gr$ be given and assume that
$B_{d,\mu}$ holds. For $\sigma\in\sym(n),$ we set
$$\delta(\sigma,\nu):=\gl\begin{array}{ll}
d+2 & \mbox{ if }\nu\in\gl \sigma(1),\dots,\sigma(\mu-1)\gr,\\
d+1 & \mbox{ if } \nu\in\gl
\sigma(\mu+1),\dots,\sigma(n)\gr.\end{array}\r.$$ It is not needed
to define $\delta(\sigma,\sigma(\mu))$  as we will see in the
sequel.

\begin{claim*} For all $l=2,\dots,n$ and all $\sigma\in\sym(n)$ with
$\sigma(\mu)=1$, the series \beq\label{Konvergenz}
\sum_{j_2=1}^{\infty}\dots\sum_{j_{l-1}=1}^{\infty}\sum_{j_l=1}^{\delta(\sigma,l)-1}
\sum_{j_{l+1}=\delta(\sigma,l+1)}^\infty\dots\sum_{j_n=\delta(\sigma,n)}^\infty
b(j_{\sigma(1)},\dots,j_{\sigma(\mu-1)},d+1,j_{\sigma(\mu+1)},\dots,j_{\sigma(n)})
\eeq is (well-defined and) convergent.
\end{claim*}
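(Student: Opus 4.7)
The plan is to establish convergence by invoking Assumptions $A_d$ and $B_{d,\mu}$ with carefully chosen auxiliary permutations. The key observation I shall exploit throughout is that, as $\sigma'$ ranges over all permutations in $\sym(n)$ with $\sigma'(\nu)=1$, Assumption $A_d$ (or $B_{d,\mu}$) yields convergent iterated sums in every one of the $(n-1)!$ possible orderings of the remaining $n-1$ $b$-positions. Well-definedness of all $b$-values in the summation range follows from a short case analysis: either some coordinate of $b$ lies in $\{1,\dots,d\}$, making $b$ defined by Assumption $A_d$, or every coordinate is at least $d+1$ with $d+1$ appearing at position $\sigma^{-1}(l)<\mu$, making $b$ defined by Assumption $B_{d,\mu}$.

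First I would pull the finite factor $\sum_{j_l=1}^{\delta(\sigma,l)-1}$ outside the outer infinite sums and fix an arbitrary value $j_l^*$ for $j_l$, reducing the problem to showing convergence of the remaining $(n-2)$-dimensional iterated sum in the natural order of $m\in\{2,\dots,n\}\setminus\{l\}$. In this residual sum $b$ has position $\mu$ locked at $d+1$ and position $\sigma^{-1}(l)$ locked at $j_l^*$. Depending on whether $j_l^*\le d$ or $j_l^*=d+1$ (the latter only possible when $\sigma^{-1}(l)<\mu$), I would invoke Assumption $A_d$ or $B_{d,\mu}$ with $\nu=\sigma^{-1}(l)$ and appropriate $k$, choosing $\sigma'$ so that the outermost summation variable of the resulting iterated sum corresponds to $b$-position $\mu$ and the remaining variables correspond, in order, to $b$-positions $\sigma^{-1}(m_1),\dots,\sigma^{-1}(m_{n-2})$, where $m_1<\dots<m_{n-2}$ is the sorted listing of $\{2,\dots,n\}\setminus\{l\}$. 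Such a $\sigma'$ exists because $\{\mu\}\cup\{\sigma^{-1}(m_k):k=1,\dots,n-2\}$ is exactly $\{1,\dots,n\}\setminus\{\sigma^{-1}(l)\}$. Invoking the hereditary property of iterated-sum convergence---namely, that fixing outermost variables preserves convergence of the inner iterated sum---I then specialize the outermost variable to $d+1$ and obtain convergence of an $(n-2)$-dimensional iterated sum in exactly the order demanded by the claim, but with every variable ranging over $[1,\infty)$.

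Finally I need to convert each full range $[1,\infty)$ back to the tail range $[\delta(\sigma,m),\infty)$ for indices $m>l$. I would do this via the identity $\sum_{j=\delta}^\infty=\sum_{j=1}^\infty-\sum_{j=1}^{\delta-1}$ applied recursively, from the innermost summation outward. At each recursion step, the finite-prefix correction $\sum_{j=1}^{\delta-1}$ splits off as an explicit finite sum, leaving a residual iterated sum of lower dimension with one more $b$-position fixed at a value from the finite prefix; the convergence of this residual sum is secured by a fresh application of $A_d$ or $B_{d,\mu}$ with a different auxiliary permutation chosen to place the newly fixed position at a suitable outer slot. The chief obstacle I anticipate is the bookkeeping needed to coordinate these auxiliary permutations across the nested recursion levels, but at each stage the combinatorial requirement reduces to assigning distinct values in $\{1,\dots,n\}$ to distinct $b$-positions, which is always feasible.
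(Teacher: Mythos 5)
Your proposal follows essentially the same strategy as the paper: split off the finite sum over $j_l$, and for each fixed value invoke the inductive assumptions ($A$-type and $B$-type) by choosing an auxiliary permutation that places the relevant fixed $b$-positions in the outer slots of the iterated sum, then peel off those outer variables. The paper is terser---it names the specific transposition-like permutation $\tau$ and cites $B_{j_l-1,\lambda+1}$ (Case 1) or $A_{j_l}$ (Case 2) in one stroke, glossing over the conversion of the tail ranges $[\delta(\sigma,m),\infty)$ back to $[1,\infty)$---whereas you spell out that recursion explicitly, but the underlying mechanism is the same.
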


\begin{proof} Let some $l\in\gl2,\dots,n\gr$ and some
$\sigma\in\sym(n)$ with $\sigma(\mu)=1$ be given. In view of $l\ne
1=\sigma(\mu)$ we have to consider only the following two cases.

\noindent{\bf Case 1:} $l\in\gl \sigma(1),\dots,\sigma(\mu-1)\gr.$

Then $\delta(\sigma,l)-1=d+1$ and there is some
$\lambda\in\gl1,\dots,\mu-1\gr$ such that $l=\sigma(\lambda)$. Now we define a
permutation $\tau\in\sym(n)$ as follows:
\beq\label{Permut}
\tau(i):=\sigma(i) \quad\mbox{ for } i\ne \lambda,\mu, \qquad
\tau(\lambda):=\sigma(\mu)=1, \qquad
\tau(\mu):=\sigma(\lambda).
\eeq
The series (\ref{Konvergenz}) is the sum of the
$\delta(\sigma,l)-1=d+1$ series
$$\sum_{j_2=1}^{\infty}\dots\sum_{j_{l-1}=1}^{\infty}
\sum_{j_{l+1}=\delta(\sigma,l+1)}^\infty\dots\sum_{j_n=\delta(\sigma,n)}^\infty
b(j_{\tau(1)},\dots,j_{\tau(\lambda-1)},j_l,j_{\tau(\lambda+1)},\dots,
j_{\tau(\mu-1)},d+1,j_{\tau(\mu+1)},\dots,j_{\tau(n)})$$ where
$j_l=1,\dots,d+1$. This series is convergent by assumption
$B_{j_l-1,\lambda+1}$ (see (\ref{AssumptionB})).
This shows the convergence of the series in (\ref{Konvergenz}).

\noindent{\bf Case 2:} $l\in\gl \sigma(\mu+1),\dots,\sigma(n)\gr.$

Then $\delta(\sigma,l)-1=d$ and there is some
$\lambda\in\gl\mu+1,\dots,n\gr$ such that $l=\sigma(\lambda)$. Now we
define $\tau$ as in (\ref{Permut}). The series (\ref{Konvergenz}) is
the sum of the $\delta(\sigma,l)-1=d$ series
$$\sum_{j_2=1}^{\infty}\dots\sum_{j_{l-1}=1}^{\infty}
\sum_{j_{l+1}=\delta(\sigma,l+1)}^\infty\dots\sum_{j_n=\delta(\sigma,n)}^\infty
b(j_{\tau(1)},\dots,j_{\tau(\mu-1)},d+1,j_{\tau(\mu+1)},\dots,
j_{\tau(\lambda-1)},j_l,j_{\tau(\lambda+1)},\dots,j_{\tau(n)})$$
where $j_l=1,\dots,d$. This latter series is convergent by
assumption $A_{j_l}$ (see (\ref{AssumptionA})). So the series in
(\ref{Konvergenz}) is convergent as  well. This proves our claim.
\end{proof}

According to Corollary \ref{Cor}, one can choose
$$\kl b(j_1,\dots,j_n)\;|\; j_1,\dots,j_{\mu-1}\ge
d+2,j_\mu=d+1,j_{\mu+1},\dots,j_n\ge d+1\kr$$ as a rearrangement
of $I_{nd+\mu}$ such that for all $\sigma\in\sym(n)$ with
$\sigma(\mu)=1,$ one has \begin{align*} &
\sum_{j_2=\delta(\sigma,2)}^{\infty}\dots\sum_{j_n=\delta(\sigma,n)}^{\infty}
b(j_{\sigma(1)},\dots,j_{\sigma(\mu-1)},d+1,j_{\sigma(\mu+1)},\dots,j_{\sigma(n)})\\
\\
& = s_{d+1}^{(\sigma)}-s_d^{(\sigma)}\\
&\quad-\sum_{l=2}^{n}
\sum_{j_2=1}^{\infty}\dots\sum_{j_{l-1}=1}^{\infty}\sum_{j_l=1}^{\delta(\sigma,l)-1}
\sum_{j_{l+1}=\delta(\sigma,l+1)}^\infty\dots\sum_{j_n=\delta(\sigma,n)}^\infty
b(j_{\sigma(1)},\dots,j_{\sigma(\mu-1)},d+1,j_{\sigma(\mu+1)},\dots,j_{\sigma(n)}).
\end{align*} Here we have used the claim above (see (\ref{Konvergenz})) and
the fact that we can identify the subset $\gl
\sigma\in\sym(n)\;|\,\sigma(\mu)=1\gr$ with $\sym(n-1)$.

Then one can see that
$$\sum_{j_2=1}^{\infty}\dots\sum_{j_n=1}^{\infty}
b(j_{\sigma(1)},\dots,j_{\sigma(\mu-1)},d+1,j_{\sigma(\mu+1)},\dots,j_{\sigma(n)})
=s_{d+1}^{(\sigma)}-s_d^{(\sigma)}$$
for all $\sigma\in\sym(n)$ with $\sigma(\mu)=1$.

In this way, we have defined $b(j_1,\dots,j_n)$ for all
$j_1,\dots,j_n\in\nat$ with $d+1\in\gl j_1,\dots,j_\mu\gr$ such
that
$$\kl b(j_1,\dots,j_n)\;|\; j_1,\dots,j_n\ge d+1,\; d+1\in\gl j_1,\dots,j_\mu\gr\kr$$
is a rearrangement of $\kl a_m\;|\; m\in\bigcup_{t=nd+1}^{nd+\mu}
I_t\kr$ and such that for all $\nu\in\gl1,\dots,\mu\gr$ and all
$\sigma\in\sym(n)$ with $\sigma(\nu)=1,$ one has
$$\sum_{j_2=1}^{\infty}\dots\sum_{j_n=1}^{\infty}
b(j_{\sigma(1)},\dots,j_{\sigma(\nu-1)},d+1,j_{\sigma(\nu+1)},\dots,j_{\sigma(n)})
=s_{d+1}^{(\sigma)}-s_d^{(\sigma)}.$$
Hence $B_{d,\mu+1}$ holds.

By induction we deduce that $B_{d,n+1}$ holds. But this (together
with assumption $A_d$) just means that $A_{d+1}$ holds. So by
induction, we obtain the validity of $A_d$ for all $d\ge 0$. This
proves our theorem. \ebew


\bibliographystyle{amsplain}

\end{document}